\documentclass[10pt]{amsart}

\usepackage{enumerate}

\newtheorem{lem}{Lemma}
\newtheorem{cor}[lem]{Corollary}
\newtheorem{thm}[lem]{Theorem}

\theoremstyle{remark}

\newcommand{\cA}{\mathcal{A}}
\newcommand{\cI}{\mathcal{I}}
\newcommand{\cJ}{\mathcal{J}}
\newcommand{\cM}{\mathcal{M}}
\newcommand{\e}{\varepsilon}

\newcommand{\C}{\mathbb{C}}

\newcommand{\U}{\mathbf{1}}
\newcommand{\mis}{\mathfrak{M}}
\newcommand{\supp}{\mathrm{supp}}

\newcommand{\set}[1]{\{#1\}}
\newcommand{\enorm}{\lVert\,\cdot\,\rVert}
\newcommand{\Bignorm}[1]{\Bigl\lVert #1 \Bigr\rVert}

\begin{document}

\title[maximal ideals in algebras of continuous functions]%
{New characterizations of maximal ideals in algebras of continuous vector-valued functions}%

\author{Mortaza Abtahi}%

\address{School of Mathematics and Computer Sciences, Damghan University,
P.O.Box 36715-364, Damghan, Iran}%

\email{abtahi@du.ac.ir}

\subjclass[2010]{Primary 46J10; secondary 46J20}
\keywords{Continuous vector-valued functions; Commutative Banach algebras; Maximal ideals}%


\begin{abstract}
  Let $X$ be a compact Hausdorf space, let
  $A$ be a commutative unital Banach algebra, and let
  $\cA$ denote the algebra of continuous
  $A$-valued functions on $X$ equipped with the uniform norm
  $\|f\|_u=\sup_{s\in S}\|f(x)\|$ for all $f\in \cA$.
  Hausner, in [Proc. Amer. Math. Soc. \textbf{8}(1957), 246--249],
  proved that $\cM$ is a maximal ideal in $\cA$ if and only if there exist
  a point $x_0\in X$ and a maximal ideal $M$ in $A$ such that
  $\cM=\set{f\in \cA:f(x_0)\in M}$. In this note, we give
  new characterizations of maximal ideals in $\cA$. We also
  present a short proof of Hausner's result by a different approach.
\end{abstract}

\maketitle

\section{Introduction}

Throughout the paper, $X$ is a compact Hausdorff space and
$C(X)$ denotes the algebra of all continuous complex-valued functions on $X$.
We let $(A,\enorm)$ be a complex commutative Banach algebra with identity. We
denote the unit element of $A$ by $\U$ and assume that $\|\U\|=1$.
We denote the algebra of all continuous functions defined on $X$
with values in $A$ by $C(X,A)$. For simplicity, we write $\cA$ to
denote $C(X,A)$. For $f\in \cA$, the \emph{uniform norm} of $f$
is defined by
\[
 \|f\|_u=\sup\set{\|f(x)\|:x\in X}.
\]

\noindent
In this setting, $(\cA,\enorm_u)$ is a commutative Banach algebra.
Given an element $a\in A$, we consider $a$ as a constant function on $X$, and
$A$ is regarded as a closed subalgebra of $\cA$. Also, by identifying every
scalar-valued function $f\in C(X)$ with the vector-valued function $x\mapsto f(x)\U$,
we regard $C(X)$ as a closed subalgebra of $\cA$.

It is well-known that an ideal $M$ in $C(X)$ is maximal if and only if
there exists a point $x_0\in X$ such that $M=\set{f\in C(X): f(x_0)=0}$.
In this note, we investigate the maximal ideals in $C(X,A)$ and give
new characterizations of these maximal ideals.

In 1950, Bertman Yood \cite{Yood} investigated the structure
of the space $\mis(\cA)$ of maximal ideals of $\cA$.
He proved \cite[Theorem 3.1]{Yood} that when $X$ is completely regular and $A$ is a
$B^*$-algebra, $\mis(\cA)$ is homeomorphic to the $\beta$-compactification of
$X\times\mis(A)$. Then, in section 5 of his paper, he assumed
that $X$ is compact but the requirement that $A$ be a $B^*$-algebra
was dropped; he proved that $\mis(\cA)$ is homeomorphic to $X$,
provided $X$ is compact and $A$ is a primary ring; \cite[Theorem 5.2]{Yood}.
He also proved that if every maximal ideal in $\cA$ is of the form
\begin{equation}\label{sp-form}
\cM=\set{f\in \cA : f(x)\in M},
\end{equation}

\noindent
for some $x\in X$ and $M$ a maximal ideal in $A$,
then $\mis(\cA)=X\times \mis(A)$.
A few years later, Alvin Hausner \cite{Hausner} improved the result
of Yood by showing that, when $X$ is compact, every maximal ideal in
$\cA$ is of the form \eqref{sp-form}.
In this paper, by a different approach,
we present a short proof for this result.

\section{Main Results}

For the Banach algebra $A$, a linear functional $\phi:A\to \C$
is called a \emph{character} if $\phi\neq0$ and
$\phi(ab)=\phi(a)\phi(b)$, for all $a,b\in A$.
We denote the set of all multiplicative functionals
on $A$ by $\mis(A)$. By \cite[Theorem 11.5]{Rudin-FA},
there is a one-to-one  correspondence between $\mis(A)$ and
the set of all maximal ideals in $A$.

Ideals in $A$ will be denoted by $I$, $J$, $M$, etc, and ideals
in $\cA$ will be denoted by $\cI$, $\cJ$, $\cM$, etc.
For an ideal $\cI$ in $\cA$, a point $x\in X$ and a
character $\phi\in \mis(A)$, we define
\begin{equation}\label{I(x) and vp(I)}
  \cI(x) = \set{f(x):f\in \cI}, \qquad
  \phi(\cI) = \set{\phi\circ f: f\in \cI}.
\end{equation}

It is easy to check that $\cI(x)$ and $\phi(\cI)$ are ideals in $A$ and
$C(X)$, respectively, and, in case $\cI$ is closed,
both $\cI(x)$ and $\phi(\cI)$ are closed. Moreover,
\[
  \cI\cap A\subset\cI(x), \quad
  \cI\cap C(X) \subset \phi(\cI).
\]

\begin{lem}
  Let $\cI$ be a closed ideal in $\cA$, and let $f\in \cA$. If
  $f(x)\in\cI(x)$, for every $x\in X$, then $f\in\cI$.
  In particular, if $f(X)\subset\cI$ then $f\in \cI$.
\end{lem}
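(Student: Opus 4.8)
The plan is to prove this by a partition-of-unity argument that localizes $f$ and approximates it, piece by piece, using the hypothesis $f(x)\in\cI(x)$. The core difficulty is that knowing $f(x)\in\cI(x)$ gives us, for each individual $x$, a function $g_x\in\cI$ with $g_x(x)=f(x)$; but these witnesses depend on $x$ and there is no a priori way to glue them into a single global member of $\cI$. The resolution is to exploit continuity to make each $g_x$ a good approximation to $f$ not just at $x$ but on a whole neighborhood, then patch the local approximations together using scalar functions from $C(X)\subset\cA$, and finally invoke closedness of $\cI$.

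\medskip
Concretely, fix $\e>0$. For each $x\in X$ choose $g_x\in\cI$ with $g_x(x)=f(x)$. Since both $f$ and $g_x$ are continuous, the set $U_x=\set{y\in X:\|f(y)-g_x(y)\|<\e}$ is an open neighborhood of $x$. By compactness of $X$, finitely many such neighborhoods $U_{x_1},\dots,U_{x_n}$ cover $X$. Let $\set{h_1,\dots,h_n}$ be a continuous partition of unity subordinate to this cover, so each $h_i\in C(X)$ satisfies $0\le h_i\le1$, $\supp h_i\subset U_{x_i}$, and $\sum_{i=1}^n h_i\equiv1$ on $X$. Regarding each $h_i$ as an element of $\cA$ via the identification $C(X)\subset\cA$, define
\[
  g=\sum_{i=1}^n h_i\,g_{x_i}.
\]
Because $\cI$ is an ideal and each $g_{x_i}\in\cI$, the product $h_i g_{x_i}$ lies in $\cI$, and hence $g\in\cI$.

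\medskip
Next I would estimate $\|f-g\|_u$. Using $f=\sum_i h_i f$ (since the $h_i$ sum to one), we have for every $y\in X$
\[
  f(y)-g(y)=\sum_{i=1}^n h_i(y)\bigl(f(y)-g_{x_i}(y)\bigr).
\]
The term $h_i(y)(f(y)-g_{x_i}(y))$ vanishes unless $y\in\supp h_i\subset U_{x_i}$, in which case $\|f(y)-g_{x_i}(y)\|<\e$ by construction. Therefore
\[
  \|f(y)-g(y)\|\le\sum_{i=1}^n h_i(y)\,\e=\e,
\]
so $\|f-g\|_u\le\e$. This shows $f$ lies in the closure of $\cI$, and since $\cI$ is closed we conclude $f\in\cI$.

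\medskip
The final sentence of the statement follows immediately: if $f(X)\subset\cI$, meaning $f(x)\in\cI$ for every $x$ (here viewing the constant value $f(x)\in A\subset\cA$), then in particular $f(x)\in\cI(x)$ for each $x$, so the main claim applies. The step I expect to be the genuine obstacle is the existence of a continuous partition of unity; this requires $X$ to be, say, normal, which is guaranteed since $X$ is compact Hausdorff. I would make sure to cite or note this, since it is the one piece of topological machinery that makes the gluing rigorous rather than merely heuristic.
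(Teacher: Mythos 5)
Your proof is correct and follows essentially the same route as the paper's: pick a local witness $g_x\in\cI$ with $g_x(x)=f(x)$, use continuity and compactness to get a finite cover on which each witness $\e$-approximates $f$, glue with a partition of unity from $C(X)\subset\cA$ to get $g\in\cI$ with $\|f-g\|_u\le\e$, and invoke closedness. The only (harmless) difference is that you are slightly more careful with the final inequality ($\le\e$ rather than the paper's $<\e$) and you explicitly note that normality of the compact Hausdorff space $X$ underwrites the partition of unity.
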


\begin{proof}
  Let $\e>0$. For every $x\in X$, by the assumption, there
  exists a function $g_x\in\cI$ such that $f(x)=g_x(x)$. Therefore,
  for every $x$, there is a neighborhood $V_x$ of $x$
  such that $\|f(t)-g_x(t)\|<\e$, for all $t\in V_x$. Since
  $X$ is compact, there are finitely many points
  $x_1,\dotsc,x_n$ in $X$ such that
  \[
    X\subset V_1\cup \dotsb \cup V_n,
    \qquad (V_i=V_{x_i}).
  \]

  \noindent
  By \cite[Theorem 2.13]{Rudin-RCA}, there are nonnegative continuous functions
  $h_1,\dotsc,h_n$ such that $\supp(h_i)\subset V_i$
  and $h_1+\dotsb+h_n=1$ on $X$. Take
  $g_i=g_{x_i}$ and $g=h_1g_1+\dotsb+h_ng_n$. Then $g\in \cI$
  and, for every $t\in X$,
  \[
    \|f(t)-g(t)\| =
    \Bignorm{\sum_{i=1}^n h_i(t)f(t)-\sum_{i=1}^n h_i(t)g_i(t)} \leq
    \sum_{i=1}^n h_i(t) \|f(t)-g_i(t)\| < \e.
  \]
  This implies that $\|f-g\|_u \leq \e$ and, since $\cI$ is
  closed, we have $f\in\cI$.
\end{proof}

\begin{cor}\label{if I(x)=J(x) for all x then I=J}
  Let $\cI$ and $\cJ$ be closed ideals in $\cA$. If
  $\cI(x)=\cJ(x)$ for every $x$, then $\cI=\cJ$.
  In particular, if $\cI(x)=A$ for every $x$, then
  $\cI=\cA$.
\end{cor}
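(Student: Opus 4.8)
The plan is to deduce both assertions directly from the preceding Lemma, so the argument is short and the only real ingredient is a symmetry between $\cI$ and $\cJ$. First I would establish the inclusion $\cI\subset\cJ$. Given $f\in\cI$, we have $f(x)\in\cI(x)$ for every $x\in X$ by the very definition of $\cI(x)$, and the hypothesis $\cI(x)=\cJ(x)$ then gives $f(x)\in\cJ(x)$ for all $x$. Since $\cJ$ is a closed ideal, the Lemma applies and yields $f\in\cJ$. This proves $\cI\subset\cJ$.

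Interchanging the roles of $\cI$ and $\cJ$ (the hypothesis $\cI(x)=\cJ(x)$ is symmetric, and $\cI$ is also closed) gives the reverse inclusion $\cJ\subset\cI$ by the same application of the Lemma. Combining the two inclusions yields $\cI=\cJ$, as required.

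For the second assertion, I would observe that $\cA$ contains every constant function, so evaluating at any point recovers all of $A$; that is, $\cA(x)=A$ for each $x\in X$. Hence the hypothesis $\cI(x)=A$ for all $x$ is precisely the statement $\cI(x)=\cA(x)$ for all $x$, and applying the first part with $\cJ=\cA$ gives $\cI=\cA$. Alternatively, one can argue in one line: if $\cI(x)=A$ for all $x$, then for any $f\in\cA$ we have $f(x)\in A=\cI(x)$ for every $x$, so the Lemma directly gives $f\in\cI$, whence $\cI=\cA$.

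There is no genuine obstacle here, since all the analytic work — the compactness argument and the partition-of-unity construction — was carried out in the proof of the Lemma. The only point meriting a moment's attention is verifying that both $\cI$ and $\cJ$ meet the closedness hypothesis required to invoke the Lemma, and this is supplied in the statement.
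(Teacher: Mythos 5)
Your proof is correct and is exactly the intended deduction from the Lemma (the paper leaves the Corollary without an explicit proof precisely because it follows this way): two symmetric applications of the Lemma give the mutual inclusions, and the one-line argument you give for the case $\cI(x)=A$ is the direct application. Nothing further is needed.
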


Let us now give a very short proof of the fact that every
maximal ideal in $\cA$ is of the form \eqref{sp-form}.

\begin{thm}\label{thm:cM is maximal iff}
  Let $\cM$ be a maximal ideal in $\cA$. Then there exist a point
  $x_0\in X$ and a maximal ideal $M$ in $A$ such that
  $\cM=\set{f\in \cA: f(x_0)\in M}$.
\end{thm}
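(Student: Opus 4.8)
The plan is to use the Corollary to localize $\cM$ at a single point of $X$ and then push maximality down from $\cA$ to $A$ through the evaluation homomorphism. As a preliminary, I would note that $\cM$, being a maximal ideal in the commutative unital Banach algebra $\cA$, is proper and closed: the complement of the group of invertible elements is closed and contains every proper ideal, so $\overline{\cM}$ is again a proper ideal, and maximality forces $\cM=\overline{\cM}$. Thus the Corollary applies to $\cM$. Were $\cM(x)=A$ for every $x\in X$, the Corollary would give $\cM=\cA$, contradicting properness; hence there is a point $x_0\in X$ with $M:=\cM(x_0)\neq A$. Recall that $M$ is an ideal in $A$, and it is proper since $\U\notin M$.

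Next I would identify $\cM$ with $\set{f\in\cA:f(x_0)\in M}$. Write $\cM'=\set{f\in\cA:f(x_0)\in M}$. This is an ideal of $\cA$ that contains $\cM$, because $f\in\cM$ implies $f(x_0)\in\cM(x_0)=M$; and it is proper, because $\U(x_0)=\U\notin M$ gives $\U\notin\cM'$. Since $\cM\subseteq\cM'\subsetneq\cA$ and $\cM$ is maximal, I conclude $\cM=\cM'$. It remains to upgrade $M$ to a maximal ideal. The evaluation map $f\mapsto f(x_0)$ is a surjective unital homomorphism $\cA\to A$ (constants attain every value), so for any ideal $N$ with $M\subseteq N\subsetneq A$ the set $\set{f\in\cA:f(x_0)\in N}$ is a proper ideal containing $\cM$; maximality of $\cM$ forces it to equal $\cM=\set{f\in\cA:f(x_0)\in M}$, and evaluating constant functions then yields $N\subseteq M$, so $N=M$. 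Hence $M$ is maximal.

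The entire content of the theorem is concentrated in the first substantive step, the production of $x_0$, and that is exactly where compactness of $X$ enters, packaged inside the Lemma and its Corollary via the partition-of-unity argument. I expect the main obstacle to be conceptual rather than computational: once the localization is in hand, the remaining steps are a routine application of the ideal correspondence for the surjective homomorphism $f\mapsto f(x_0)$, and I anticipate no difficulty there.
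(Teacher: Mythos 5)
Your proof is correct and follows essentially the same route as the paper: both use the Corollary to produce a point $x_0$ with $\cM(x_0)\neq A$, identify $\cM$ with $\set{f\in\cA:f(x_0)\in \cM(x_0)}$ by properness plus maximality, and deduce maximality of $\cM(x_0)$ from maximality of $\cM$ (the paper via a direct $\U=ag+f$ computation, you via the ideal correspondence for the evaluation map --- a cosmetic difference). Your explicit remark that $\cM$ is closed, which the paper leaves tacit but which is needed to invoke the Corollary, is a welcome addition.
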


\begin{proof}
  There exists, by Corollary \ref{if I(x)=J(x) for all x then I=J}, a point $x_0\in X$
  such that $\cM(x_0)\neq A$. We prove that the ideal $\cM(x_0)$
  is maximal in $A$. If $a\in A$ and $a\notin\cM(x_0)$, then $a$, as
  a constant function on $X$, does not belong to
  $\cM$ and so, for some $g\in\cA$ and $f\in\cM$, we have
  $\U=ag+f$. In particular, $\U=ag(x_0)+f(x_0)$ meaning
  $A$ is generated by $\cM(x_0)\cup\set{a}$ and hence
  $\cM(x_0)$ is maximal in $A$.
  Now, let $\cM_0=\set{f\in \cA: f(x_0)\in \cM(x_0)}$.
  Then $\cM_0$ is a proper ideal in $\cA$ and
  $\cM\subset \cM_0$. Hence $\cM=\cM_0$.
\end{proof}

\begin{thm}\label{M(x)=M cap A}
  If $\cM$ is a maximal ideal in $\cA$, then $\cM(x_0)$, for some $x_0\in X$,
  is a maximal ideal in $A$, and $\cM(x_1)=A$ for $x_1\neq x_0$. Moreover,
  $\cM(x_0)=\cM\cap A$.
\end{thm}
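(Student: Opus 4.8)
The plan is to leverage Theorem~\ref{thm:cM is maximal iff}, which is already available: it produces a point $x_0\in X$ and a maximal ideal $M$ in $A$ with $\cM=\set{f\in\cA:f(x_0)\in M}$. The first step is to identify $M$ with $\cM(x_0)$. The inclusion $\cM(x_0)\subset M$ is immediate from this description, since every $f\in\cM$ satisfies $f(x_0)\in M$. For the reverse inclusion, given $a\in M$, the constant function $a$ lies in $\cM$ because its value at $x_0$ is $a\in M$; hence $a=a(x_0)\in\cM(x_0)$. Thus $\cM(x_0)=M$ is maximal in $A$, settling the first assertion.

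The same constant-function argument will also yield the last assertion, $\cM(x_0)=\cM\cap A$. One inclusion is the general containment $\cM\cap A\subset\cM(x_0)$ noted earlier. For the other, if $a\in\cM(x_0)=M$, then the constant function $a$ belongs to $\cM$, so $a\in\cM\cap A$; hence $\cM\cap A=\cM(x_0)$.

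The remaining, and main, point is to show $\cM(x_1)=A$ whenever $x_1\neq x_0$. Here I expect the essential ingredient to be the topological separation provided by Urysohn's lemma: since $X$ is compact Hausdorff and $\set{x_0}$, $\set{x_1}$ are disjoint closed sets, there is a continuous $h\colon X\to[0,1]$ with $h(x_0)=0$ and $h(x_1)=1$. Then, for an arbitrary $a\in A$, the function $f=ha$ (that is, $x\mapsto h(x)a$) lies in $\cM$, because $f(x_0)=h(x_0)a=0\in M$, while $f(x_1)=h(x_1)a=a$. This exhibits every $a\in A$ as a value at $x_1$ of a member of $\cM$, so $\cM(x_1)=A$.

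The only subtlety is to confirm that $x_0$ is genuinely distinguished. Corollary~\ref{if I(x)=J(x) for all x then I=J} guarantees at least one point with $\cM(x_0)\neq A$, and the Urysohn construction then forces every other point to satisfy $\cM(x_1)=A$, so $x_0$ is in fact unique. Beyond this, the only thing to check is that $f=ha$ is a legitimate element of $\cA$ and that it belongs to $\cM$, both of which are routine once $h$ is in hand; I do not anticipate any serious obstacle.
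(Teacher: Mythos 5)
Your proposal is correct, but it takes a recognizably different route from the paper's. You invoke the full conclusion of Theorem \ref{thm:cM is maximal iff} --- the explicit description $\cM=\set{f\in\cA:f(x_0)\in M}$ --- which makes membership in $\cM$ checkable pointwise at $x_0$; after that every claim is settled by exhibiting a concrete witness: constant functions give $M=\cM(x_0)=\cM\cap A$, and a Urysohn function times $a$ gives $\cM(x_1)=A$. The paper instead extracts from the \emph{proof} of Theorem \ref{thm:cM is maximal iff} only the existence of a point $x_0$ with $\cM(x_0)$ maximal, and then argues from maximality of $\cM$ itself: for $a\notin\cM$ (or $g\notin\cM$) it writes $\U=ag+h$ with $h\in\cM$ and evaluates at $x_0$ or at $x_1$, obtaining both $\cM(x_0)=\cM\cap A$ and $\cM(x_1)=A$ without ever using the set-theoretic description of $\cM$. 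Your version is shorter and more transparent once Theorem \ref{thm:cM is maximal iff} is in hand; the paper's is marginally more economical in its hypotheses, needing only that some $\cM(x_0)$ is a proper maximal ideal. One cosmetic remark: your closing appeal to Corollary \ref{if I(x)=J(x) for all x then I=J} is superfluous, since the point $x_0$ is already supplied by Theorem \ref{thm:cM is maximal iff} and your Urysohn argument shows every other point satisfies $\cM(x_1)=A$, which is exactly the uniqueness required.
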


\begin{proof}
  The proof of Theorem \ref{thm:cM is maximal iff} shows
  that there is a point $x_0\in X$
  such that $\cM(x_0)$ is a maximal ideal in $A$.
  Clearly, $\cM\cap A\subset \cM(x_0)$. Assume that $a\in A$
  and $a\notin\cM$. Then $\U=ag+h$, for some $g\in\cA$ and
  $h\in\cM$. Hence $\U=ag(x_0)+h(x_0)$. Since $h(x_0)\in\cM(x_0)$
  and $\cM(x_0)$ is a proper ideal, we must have $a\notin\cM(x_0)$.
  Therefore, $\cM(x_0)\subset \cM\cap A$.

  Now, assume that $x_1\neq x_0$. There is a function $g\in C(X)$ such that
  $g(x_0)=1$ and $g(x_1)=0$. Hence $g\notin\cM$ and so
  there exist $h\in\cA$ and $f\in \cM$ such that $\U=f+gh$.
  Hence, $\U\in \cM(x_1)$ which means that $\cM(x_1)=A$.
\end{proof}

\begin{cor}\label{if I(x0) neq A and I(x1) neq A then I is not maximal}
  Let $\cI$ be an ideal in $\cA$.
  If there exist two distinct points $x_0,x_1\in X$ such that
  $\cI(x_0)\neq A$ and $\cI(x_1)\neq A$,
  then $\cI$ cannot be maximal.
\end{cor}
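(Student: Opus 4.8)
The plan is to prove this by contraposition, reducing it directly to the structural description already established in Theorem \ref{M(x)=M cap A}. Suppose, contrary to the conclusion, that $\cI$ is a maximal ideal in $\cA$. Since $\cA$ is a unital Banach algebra and the set of invertible elements is open, the closure of any proper ideal is again proper; hence a maximal ideal is automatically closed, and Theorem \ref{M(x)=M cap A} applies to $\cM=\cI$. That theorem then yields a single distinguished point $x_0\in X$ at which $\cI(x_0)$ is a maximal, and therefore proper, ideal in $A$, while $\cI(x)=A$ for every $x\neq x_0$.

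The key observation is that Theorem \ref{M(x)=M cap A} pins down the set $\set{x\in X:\cI(x)\neq A}$ completely: under the assumption of maximality this set is contained in the singleton $\set{x_0}$. In particular, there cannot exist two distinct points at which $\cI(x)$ is a proper ideal of $A$. This is exactly in tension with the hypothesis of the corollary, which furnishes distinct points $x_0',x_1'\in X$ with $\cI(x_0')\neq A$ and $\cI(x_1')\neq A$. Two distinct points cannot both lie in a one-point set, so we reach a contradiction, and $\cI$ cannot be maximal.

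I expect no genuine obstacle here, since the corollary is essentially the contrapositive repackaging of the ``$\cM(x_1)=A$ for $x_1\neq x_0$'' clause of the preceding theorem. The only point deserving a word of care is logical bookkeeping: Theorem \ref{M(x)=M cap A} is phrased for a maximal ideal, so before invoking it I must observe that the ideal $\cI$ in the corollary is being \emph{assumed} maximal for the sake of contradiction, and that this assumption is precisely what licenses the application. Once that is in place, the argument is immediate and requires no further computation.
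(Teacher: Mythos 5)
Your proof is correct and is exactly the intended argument: the paper states this as an immediate corollary of Theorem \ref{M(x)=M cap A} (with no written proof), and your contrapositive reading — a maximal ideal has $\cI(x)=A$ at all but one point, so two distinct points with proper $\cI(x)$ are impossible — is precisely that deduction. The side remark that maximal ideals are automatically closed is a correct and welcome piece of bookkeeping, since the chain of results behind Theorem \ref{M(x)=M cap A} does rely on closedness.
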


\begin{thm}\label{vp(M)=M cap C(X)}
  If $\cM$ is a maximal ideal in $\cA$, then $\phi(\cM)$, for some
  character $\phi\in\mis(A)$, is
  a maximal ideal in $C(X)$, and $\psi(\cM)=C(X)$ for $\psi\neq \phi$.
  Moreover, $\phi(\cM)=\cM\cap C(X)$.
\end{thm}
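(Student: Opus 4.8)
The plan is to run this argument in parallel with Theorem~\ref{M(x)=M cap A}, replacing evaluation at a point $x$ by composition with a character $\phi$, and the subalgebra $A$ by the subalgebra $C(X)$. By Theorem~\ref{thm:cM is maximal iff} there are a point $x_0\in X$ and a maximal ideal $M$ in $A$ with $\cM=\set{f\in\cA:f(x_0)\in M}$, and by \cite[Theorem 11.5]{Rudin-FA} this $M$ is the kernel of a unique character $\phi\in\mis(A)$. I claim that $\phi$ is exactly the character asserted in the statement, and I would prove the three assertions in turn.

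First I would identify $\phi(\cM)$ explicitly. If $f\in\cM$, then $f(x_0)\in M=\ker\phi$, so $(\phi\circ f)(x_0)=0$, giving $\phi(\cM)\subset\set{g\in C(X):g(x_0)=0}$. Conversely, given $g\in C(X)$ with $g(x_0)=0$, the function $g\U$ lies in $\cM$ (its value at $x_0$ is $0\in M$) and $\phi\circ(g\U)=g$, since $\phi(\U)=1$; this yields the reverse inclusion. Hence $\phi(\cM)=\set{g\in C(X):g(x_0)=0}$, which is a maximal ideal in $C(X)$ by the standard description recalled in the introduction. The equality $\phi(\cM)=\cM\cap C(X)$ then comes almost for free: the inclusion $\cM\cap C(X)\subset\phi(\cM)$ is the general fact noted before Lemma~1, and for the reverse one checks that $g\U\in\cM$ holds precisely when $g(x_0)=0$ (if $g(x_0)\neq0$ then $\U=g(x_0)^{-1}\,g(x_0)\U\in M$, contradicting that $M$ is proper), so $\cM\cap C(X)=\set{g:g(x_0)=0}=\phi(\cM)$.

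The remaining assertion is that $\psi(\cM)=C(X)$ for every character $\psi\neq\phi$, and this is the step I expect to carry the real content. Since $\psi(\cM)$ is an ideal in $C(X)$, it suffices to produce a single $f\in\cM$ with $\psi\circ f$ equal to the constant function $1$, forcing the identity of $C(X)$ into $\psi(\cM)$. I would obtain such an $f$ as a constant function $a\in A$: I need $a$ with $\phi(a)=0$ (so that $a=a(x_0)\in M$ and hence $a\in\cM$) and $\psi(a)=1$ (so that $\psi\circ a\equiv1$). To find it, note that $\ker\phi$ and $\ker\psi$ are distinct maximal ideals; because $\ker\phi$ is maximal and is not contained in the proper ideal $\ker\psi$, there exists $b\in\ker\phi$ with $\psi(b)\neq0$, and then $a=\psi(b)^{-1}b$ does the job.

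The genuinely nonroutine point is precisely this separation of the two characters—equivalently, the fact that distinct characters are linearly independent, here phrased through non-containment of their kernels. Everything else is bookkeeping built on the structure theorem and on the known form of maximal ideals in $C(X)$. The case in which $A$ has a single character requires no attention, as the statement about $\psi\neq\phi$ is then vacuous.
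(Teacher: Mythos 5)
Your proof is correct, and while it follows the same overall skeleton as the paper's, it differs in two respects worth noting. First, the starting point: the paper works from Theorem \ref{M(x)=M cap A}, choosing $\phi$ with $\ker\phi=\cM(x_0)$ and establishing $\cM\cap C(X)=\phi(\cM)=\set{f\in C(X):f(x_0)=0}$ by invoking maximality of $\cM$ directly (if $f\in C(X)\setminus\cM$ then $\U=fg+h$ with $h\in\cM$, forcing $f(x_0)\neq0$); you instead import the explicit description $\cM=\set{f\in\cA:f(x_0)\in M}$ from Theorem \ref{thm:cM is maximal iff} and verify the same equalities by direct computation. These are equivalent in substance. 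Second, and more significantly, the assertion $\psi(\cM)=C(X)$ for $\psi\neq\phi$: the paper's proof as written only shows that $\psi(\cM)=\phi(\cM)$ forces $\psi=\phi$, i.e.\ that $\psi(\cM)\neq\phi(\cM)$ when $\psi\neq\phi$, which is literally weaker than the stated conclusion. Your argument --- using that the distinct maximal ideals satisfy $\ker\phi\not\subset\ker\psi$ to produce $a\in\ker\phi$ with $\psi(a)=1$, so that the constant function $1=\psi\circ a$ lies in the ideal $\psi(\cM)$ and hence $\psi(\cM)=C(X)$ --- proves exactly what the theorem claims and is the cleaner treatment of that step. The remaining verifications (that $g\U\in\cM$ iff $g(x_0)=0$, and that $\phi(\cM)$ is the maximal ideal of functions vanishing at $x_0$) are routine and correct.
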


\begin{proof}
  By Theorem \ref{M(x)=M cap A}, there exists a point
  $x_0\in X$ such that $\cM(x_0)$
  is a maximal ideal in $A$. By \cite[Theorem 11.5]{Rudin-FA},
  there is a character $\phi:A\to\C$ with
  $\cM(x_0)=\ker \phi$. We show that
  \begin{equation}\label{eqn:phi(cM)=cM cap C(X)}
    \cM \cap C(X) = \phi(\cM) = \set{f\in C(X):f(x_0)=0}.
  \end{equation}

  \noindent
  The inclusions from left to right in the above equations are obvious.
  If $f\in C(X)$ and $f\notin \cM$, then
  $\U=fg+h$, for some $g\in\cA$ and $h\in\cM$. Hence,
  $\U=f(x_0)g(x_0)$ which means that $f(x_0)\neq0$.

  Now, let $\psi\in\mis(A)$ with
  $\phi(\cM)=\psi(\cM)$. If $a\in\ker\phi$ then $a\in\cM$
  an thus $\psi(a)\in\phi(\cM)$. By \eqref{eqn:phi(cM)=cM cap C(X)},
  since $\psi(a)$ is constant,
  we get $\psi(a)=0$.
  This shows that $\ker\phi\subset \ker \psi$
  and so $\phi=\psi$.
\end{proof}

We can summarize the above results in the following statement.

\begin{thm}\label{maximal ideals in cA}
  For an ideal $\cM$ in $\cA$ the following are equivalent.
  \begin{enumerate}[\upshape(i)]
    \item \label{item:M is maximal}
    The ideal $\cM$ is maximal.

    \item \label{item:M cap A and M cap C(X) are maximal}
    The ideals $\cM\cap A$ and $\cM\cap C(X)$ are maximal,
    respectively in $A$ and $C(X)$.

    \item \label{item:M(x) is maximal for a unique x}
    There is a unique point $x_0\in X$ such that $\cM(x_0)\neq A$ and
    $\cM(x_0)$ is maximal in $A$.


    \item \label{item:M if of the form (f:f(x) in M)}
    There exist a point $x_0\in X$ and a maximal ideal $M$ in $A$
    such that
    \[
      \cM=\set{f\in \cA: f(x_0)\in M}.
    \]
  \end{enumerate}
\end{thm}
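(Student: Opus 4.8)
My plan is to prove the equivalence by running the single cycle $(\mathrm{i})\Rightarrow(\mathrm{ii})\Rightarrow(\mathrm{iii})\Rightarrow(\mathrm{iv})\Rightarrow(\mathrm{i})$, since the implications emanating from $(\mathrm{i})$ are already essentially packaged in the preceding theorems and the real content lies in the returning arrows. For $(\mathrm{i})\Rightarrow(\mathrm{ii})$ I simply read off the two earlier identifications: Theorem \ref{M(x)=M cap A} gives $\cM\cap A=\cM(x_0)$ and asserts it is maximal in $A$, while Theorem \ref{vp(M)=M cap C(X)} gives $\cM\cap C(X)=\phi(\cM)$ and asserts it is maximal in $C(X)$, which is exactly $(\mathrm{ii})$.

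For $(\mathrm{ii})\Rightarrow(\mathrm{iii})$ I would start from the fact that maximal ideals of $C(X)$ are point-evaluation kernels, so $\cM\cap C(X)=\set{f\in C(X):f(x_0)=0}$ for a unique $x_0$. Given any $x\neq x_0$, choosing $u\in C(X)$ with $u(x_0)=0$ and $u(x)=1$ puts $u\in\cM$, whence $au\in\cM$ and $\cM(x)=A$ for every $a\in A$; thus $x_0$ is the only point at which $\cM(\cdot)$ can fail to be all of $A$. Since $\cM\cap A\subseteq\cM(x_0)$ and $\cM\cap A$ is maximal, $\cM(x_0)$ equals either $\cM\cap A$ or $A$; the latter would force $\cM(x)=A$ for all $x$ and hence $\cM=\cA$ by Corollary \ref{if I(x)=J(x) for all x then I=J}, contradicting properness. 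Therefore $\cM(x_0)=\cM\cap A$ is maximal, giving $(\mathrm{iii})$.

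The implication $(\mathrm{iii})\Rightarrow(\mathrm{iv})$ is the place where Corollary \ref{if I(x)=J(x) for all x then I=J} does the essential work: setting $M=\cM(x_0)$ and $\cM_0=\set{f\in\cA:f(x_0)\in M}$, every $f\in\cM$ has $f(x_0)\in\cM(x_0)=M$, so $\cM\subseteq\cM_0$, and a direct check gives $\cM_0(x_0)=M=\cM(x_0)$ together with $\cM_0(x)=A=\cM(x)$ for $x\neq x_0$; hence Corollary \ref{if I(x)=J(x) for all x then I=J} yields $\cM=\cM_0$, which is $(\mathrm{iv})$. Finally, for $(\mathrm{iv})\Rightarrow(\mathrm{i})$ I would use the evaluation map $\mathrm{ev}_{x_0}\colon\cA\to A$, $f\mapsto f(x_0)$, a surjective unital homomorphism; composing it with the quotient $A\to A/M$ produces a surjection whose kernel is precisely $\cM$, so $\cA/\cM\cong A/M$. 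As $A$ is commutative and unital and $M$ is maximal, $A/M$ is a field, hence so is $\cA/\cM$, and $\cM$ is maximal.

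The main obstacle is that Corollary \ref{if I(x)=J(x) for all x then I=J}, on which the returning arrows rest, requires the ideal to be closed, whereas the pointwise hypotheses in $(\mathrm{ii})$ and $(\mathrm{iii})$ do not by themselves deliver closedness. This is harmless on the $(\mathrm{i})$/$(\mathrm{iv})$ side, since a maximal ideal of a unital Banach algebra is automatically closed and $\set{f:f(x_0)\in M}$ is closed as the preimage of the closed maximal ideal $M$; but it is the delicate point going backward. Indeed, closedness cannot simply be dropped: for $A=\C$ and $X=[0,1]$, the non-closed ideal of functions vanishing on a neighborhood of a point $x_0$ satisfies $(\mathrm{iii})$ yet is not maximal. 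I would therefore carry the standing assumption that $\cM$ is closed through the steps $(\mathrm{ii})\Rightarrow(\mathrm{iii})\Rightarrow(\mathrm{iv})$; since every condition in the cycle is ultimately equivalent to maximality, and hence to closedness, this is the natural reading that makes the argument go through.
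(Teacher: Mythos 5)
Your cycle (i)$\Rightarrow$(ii)$\Rightarrow$(iii)$\Rightarrow$(iv)$\Rightarrow$(i) is exactly the decomposition the paper uses, and the individual steps match almost verbatim: (i)$\Rightarrow$(ii) is read off Theorems \ref{M(x)=M cap A} and \ref{vp(M)=M cap C(X)}, and both returning arrows rest on Corollary \ref{if I(x)=J(x) for all x then I=J}. The only cosmetic differences are that you locate $x_0$ from the $C(X)$ side first (the paper first produces $x_0$ with $\cM(x_0)\neq A$ via the corollary and then identifies $\cM\cap C(X)$ with the evaluation kernel at that point), and that you prove (iv)$\Rightarrow$(i) directly via $\cA/\cM\cong A/M$ where the paper simply cites Yood. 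All of these steps are correct.

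The substantive part of your write-up is the closedness caveat, and you are right to flag it: this is a genuine gap in the theorem as stated, not merely in your proof. Corollary \ref{if I(x)=J(x) for all x then I=J} is proved only for \emph{closed} ideals, yet the paper applies it to $\cM$ both in (ii)$\Rightarrow$(iii) (to produce $x_0$ with $\cM(x_0)\neq A$) and in (iii)$\Rightarrow$(iv) (to conclude $\cM=\cI$ from $\cM(x)=\cI(x)$ for all $x$), without any argument that an ideal satisfying (ii) or (iii) is closed. Your example settles the matter for (iii): in $C([0,1])$ with $A=\C$, the ideal of functions vanishing on a neighborhood of $0$ satisfies (iii) with $x_0=0$ and $\cM(0)=\set{0}$, yet it is properly contained in $\set{f:f(0)=0}$ and hence not maximal; so (iii) does not imply (i) for arbitrary ideals, and note that the same example even has $\cM\cap A=\set{0}$ maximal in $\C$, so weakening (iii) to refer to $\cM\cap A$ would not help. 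The theorem therefore needs the standing hypothesis that $\cM$ is closed (your proposed reading, under which your proof is complete), and this is a correction the paper itself requires. The one point where I would be more careful in your wording is the closing remark that the assumption is harmless because ``every condition in the cycle is ultimately equivalent to maximality'': that is exactly what fails for (iii), so the closedness hypothesis must genuinely be added to the statement rather than absorbed after the fact.
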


\begin{proof}
  $\eqref{item:M is maximal}\Rightarrow\eqref{item:M cap A and M cap C(X) are maximal}$:
  It follows from Theorem \ref{M(x)=M cap A} and Theorem \ref{vp(M)=M cap C(X)}.

  $\eqref{item:M cap A and M cap C(X) are maximal}\Rightarrow\eqref{item:M(x) is maximal for a unique x}$:
  Since $\cM\neq \cA$, by Lemma \ref{if I(x)=J(x) for all x then I=J}, there is a point
  $x_0\in X$ such that $\cM(x_0)\neq A$. The proper ideal $\cM(x_0)$ is maximal, since it
  contains the maximal ideal $\cM\cap A$. On the other hand, since $\cM\cap C(X)$ is
  maximal in $C(X)$, we must have
  \[
    \cM\cap C(X) = \set{f\in C(X): f(x_0)=0}.
  \]

  \noindent
  Now, if $x_1\neq x_0$, there is a function $f\in C(X)$ such that
  $f(x_0)=0$ and $f(x_1)=1$. It shows that $f\in \cM\cap C(X)$.
  Hence $\U=f(x_1)\in \cM(x_1)$ and thus $\cM(x_1)=A$.

  $\eqref{item:M(x) is maximal for a unique x} \Rightarrow \eqref{item:M if of the form (f:f(x) in M)}$:
  Set $M=\cM(x_0)$ and $\cI=\set{f\in \cA: f(x_0)\in M}$.
  Then $\cM\subset \cI$. If $x\in X\setminus\set{x_0}$ then, by
  the assumption, $A=\cM(x)\subset \cI(x)\subset A$.
  Hence we must have $\cI(x_0)\neq A$ since $\cI\neq A$.
  Since $\cM(x_0)$ is maximal and
  $\cM(x_0) \subset \cI(x_0)$, we get $\cM(x_0)=\cI(x_0)$.
  Therefore, for every $x\in X$, we have shown that $\cM(x)=\cI(x)$.
  Lemma \ref{if I(x)=J(x) for all x then I=J} shows that  $\cM=\cI$.

  The implication
  $\eqref{item:M if of the form (f:f(x) in M)}\Rightarrow\eqref{item:M is maximal}$
  is easy to prove; see \cite[Lemma 2.1]{Yood}.
\end{proof}

\end{document}